\newtheorem{thm}{Theorem}
\newtheorem{cor}[thm]{Corollary}
\newtheorem{lemma}[thm]{Lemma}
\newtheorem{prop}[thm]{Proposition}
\newcommand{\R}{\mathbb{R}}
\newcommand{\E}{\mathbb{E}}
\newcommand{\Prob}{\mathbb{P}}
\newcommand{\N}{\mathbb{N}}
\newcommand{\abs}[1]{\left\vert #1 \right\vert}
\newcommand{\norm}[1]{\left\Vert #1 \right\Vert}
\newcommand{\eps}{\varepsilon}
\newcommand{\dfn}[1]{\textbf{#1}}
\newcommand{\ind}[1]{\mathbbm{1}_{#1}}
\DeclareMathOperator{\Var}{Var}
\author{Elizabeth S.\ Meckes}
\author{Mark W.\ Meckes}
\address{Department of Mathematics, Case Western Reserve University,
10900 Euclid Ave., Cleveland, Ohio 44106, U.S.A.}
\email{elizabeth.meckes@case.edu}
\address{Department of Mathematics, Case Western Reserve University,
10900 Euclid Ave., Cleveland, Ohio 44106, U.S.A.}
\email{mark.meckes@case.edu}
\title{On the equivalence of modes of convergence for log-concave
  measures}
\begin{document}

\begin{abstract}
  An important theme in recent work in asymptotic geometric analysis
  is that many classical implications between different types of
  geometric or functional inequalities can be reversed in the presence
  of convexity assumptions.  In this note, we explore the extent to
  which different notions of distance between probability measures are
  comparable for log-concave distributions. Our results imply that
  weak convergence of isotropic log-concave distributions is
  equivalent to convergence in total variation, and is further
  equivalent to convergence in relative entropy when the limit measure
  is Gaussian.
\end{abstract}

\maketitle


\section{Introduction and statements of results}

An important theme in recent work in asymptotic geometric analysis is
that many classical implications between different types of geometric
or functional inequalities can be reversed in the presence of
convexity.  A particularly striking recent example is the work of E.\
Milman \cite{Milman1,Milman2,Milman3}, showing for example that, on a
Riemannian manifold equipped with a probability measure satisfying a
convexity assumption, the existence of a Cheeger inequality, a
Poincar\'e inequality, and exponential concentration of Lipschitz
functions are all equivalent.  Important earlier examples of this
theme are C.\ Borell's 1974 proof of reverse H\"older inequalities for
log-concave measures \cite{Borell}, and K.\ Ball's 1991 proof of a
reverse isoperimetric inequality for convex bodies \cite{Ball}.

In this note, we explore the extent to which different notions of
distance between probability measures are comparable in the presence
of a convexity assumption.  Specifically, we consider
\dfn{log-concave} probability measures; that is, Borel probability
measures $\mu$ on $\R^n$ such that for all nonempty compact sets $A, B
\subseteq \R^n$ and every $\lambda \in (0, 1)$,
\[
\mu(\lambda A + (1-\lambda) B) \ge \mu(A)^\lambda \mu(B)^{1-\lambda}.
\]
We moreover consider only those log-concave probability measures $\mu$
on $\R^n$ which are \dfn{isotropic}, meaning that if $X \sim \mu$
then
\[
\E X=0
\qquad \text{and} \qquad \E XX^T=I_n.
\]

The following distances between probability measures $\mu$ and $\nu$
on $\R^n$ appear below.
\begin{enumerate}
\item The \dfn{total variation distance} is defined by
  \begin{equation*}
    d_{TV}(\mu,\nu):=2\sup_{A\subseteq\R^n}\abs{\mu(A)-\nu(A)},
  \end{equation*}
  where the supremum is over Borel measurable sets.  
\item The \dfn{bounded Lipschitz distance} is defined by
  \begin{equation*}
    d_{BL}(\mu,\nu):=\sup_{\norm{g}_{BL}\le 1}\abs{\int g \ d\mu-\int
      g \ d\nu},
  \end{equation*}
  where the bounded-Lipschitz norm $\norm{g}_{BL}$ of $g:\R^n\to\R$ is
  defined by
  \[
  \norm{g}_{BL} := \max\left\{\norm{g}_\infty,\ \sup_{x\neq
      y}\frac{\abs{g(x)-g(y)}}{\norm{x-y}}\right\}
  \] 
  and $\norm{\cdot}$ denotes the standard Euclidean norm on $\R^n$.
  The bounded-Lipschitz distance is a metric for the weak topology on
  probability measures (see, e.g., \cite[Theorem 11.3.3]{Dudley}).
\item The \dfn{$L_p$ Wasserstein distance} for $p\ge 1$ is defined by
  \begin{equation*}
    W_p(\mu,\nu) :=
    \inf_\pi\left[\int\norm{x-y}^p \ d\pi(x,y)\right]^{\frac{1}{p}},
  \end{equation*}
  where the infimum is over couplings $\pi$ of $\mu$ and $\nu$; that
  is, probability measures $\pi$ on $\R^{2n}$ such that
  $\pi(A\times\R^n)=\mu(A)$ and $\pi(\R^n\times B)=\nu(B)$. The $L_p$
  Wasserstein distance is a metric for the topology of weak
  convergence plus convergence of moments of order $p$ or less. (See
  \cite[Section 6]{Villani2} for a proof of this fact, and a lengthy
  discussion of the many fine mathematicians after whom this distance
  could reasonably be named.)

\item If $\mu$ is absolutely continuous with respect to $\nu$, the
  \dfn{relative entropy}, or \dfn{Kullback--Leibler divergence}
  is defined by
  \begin{equation*}
    H(\mu\mid\nu) := \int
    \left(\frac{d\mu}{d\nu}\right)\log\left(\frac{d\mu}{d\nu}\right) \
    d\nu
    = \int \log\left(\frac{d\mu}{d\nu}\right) \ d\mu.
  \end{equation*}
\end{enumerate}

\bigskip

It is a classical fact that for any probability measures $\mu$ and
$\nu$ on $\R^n$,
\begin{equation}\label{E:bltv-classical}
  d_{BL}(\mu,\nu)\le d_{TV}(\mu,\nu).
\end{equation}
This follows from a dual formulation of total variation distance: the
Riesz representation theorem implies that
\begin{equation} \label{E:tv-dual}
d_{TV}(\mu,\nu) = \sup \left\{ \abs{\int g \ d\mu-\int g \ d\nu}
    \colon g \in C(\R^n), \ \norm{g}_\infty \le 1 \right\}.
\end{equation}
In the case that $\mu$ and $\nu$ are log-concave, there is the
following complementary inequality.

\begin{prop} 
  \label{T:tv-bl}
  Let $\mu$ and $\nu$ be log-concave isotropic probability measures on
  $\R^n$. Then 
  \[
  d_{TV}(\mu, \nu) \le C \sqrt{ n d_{BL}(\mu, \nu)}.
  \]
\end{prop}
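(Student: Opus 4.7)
\emph{Proof proposal.} The plan is to combine the dual characterization \eqref{E:tv-dual} of $d_{TV}$ with a Gaussian smoothing of the test function. If $g \in C(\R^n)$ with $\norm{g}_\infty \le 1$, $\sigma > 0$, and $\varphi_\sigma$ is the density of $N(0, \sigma^2 I_n)$, set $g_\sigma := g * \varphi_\sigma$. Then $\norm{g_\sigma}_\infty \le 1$, and since $\norm{\nabla \varphi_\sigma}_{L^1} = \sigma^{-1} \E\norm{Z} \le \sqrt{n}/\sigma$ for $Z \sim N(0, I_n)$, the function $g_\sigma$ is $(\sqrt{n}/\sigma)$-Lipschitz, so $\norm{g_\sigma}_{BL} \le \sqrt{n}/\sigma$ in the interesting range $\sigma \le \sqrt{n}$. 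Using the identity $\int g_\sigma\, d\mu = \int g\, d(\mu * \varphi_\sigma)$ and the triangle inequality,
\[
\abs{\int g\,d\mu - \int g\,d\nu} \le \frac{\sqrt{n}}{\sigma}\,d_{BL}(\mu,\nu) + d_{TV}(\mu,\mu*\varphi_\sigma) + d_{TV}(\nu,\nu*\varphi_\sigma).
\]

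The main technical step is then to show that, for every isotropic log-concave $\mu$ on $\R^n$,
\[
d_{TV}(\mu,\mu*\varphi_\sigma) \le C\,\sigma\sqrt{n}.
\]
Since $\mu*\varphi_\sigma$ is the average $\E_Z[\tau_{\sigma Z}\mu]$ of translates of $\mu$, Jensen's inequality gives $d_{TV}(\mu, \mu*\varphi_\sigma) \le \E_Z\, d_{TV}(\mu, \tau_{\sigma Z}\mu)$, so it suffices to prove the dimension-free translate inequality
\[
d_{TV}(\mu, \tau_v\mu) \le C\,\norm{v}
\]
for all $v \in \R^n$ and all isotropic log-concave $\mu$; combined with $\E\norm{\sigma Z} \le \sigma\sqrt{n}$, this completes the reduction. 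For the translate inequality, rotate so $v = a e_1$ and slice $x = (t, x')$: for each $x'$ the function $t \mapsto f(t, x')$ is one-dimensional log-concave and hence unimodal, so its total variation equals $2\sup_t f(t, x')$, giving
\[
\int_\R \abs{f(t, x') - f(t - a, x')}\,dt \le 2a\,\sup_t f(t, x'),
\]
and integrating in $x'$ yields $d_{TV}(\mu, \tau_v\mu) \le 2\norm{v}\int_{\R^{n-1}} \sup_t f(t, x')\,dx'$.

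I expect the main obstacle to be the dimension-free bound
\[
\int_{\R^{n-1}} \sup_t f(t, x')\,dx' \le C
\]
for every isotropic log-concave density $f$ on $\R^n$ with a universal constant $C$; this is where isotropy (as opposed to just log-concavity) is essential. My plan for it is to observe that $x' \mapsto \sup_t f(t, x')$ is itself log-concave on $\R^{n-1}$ (from concavity of $\log f$), apply Fradelizi's inequality on each one-dimensional slice to estimate $\sup_t f(t, x') \le e\, f(m(x'), x')$ with $m(x') = \E[X_1 \mid X' = x']$, and reduce the claim to controlling $\E[1/\sigma_1(X')]$ for the conditional standard deviations $\sigma_1(x') = \sqrt{\Var(X_1 \mid X' = x')}$, which should be under control using the isotropy condition $\E\,\Var(X_1 \mid X') \le 1$. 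Granting all this, inserting back into the first display and choosing $\sigma$ proportional to $\sqrt{d_{BL}(\mu, \nu)}$ balances the two terms on the right and yields $d_{TV}(\mu, \nu) \le C'\sqrt{n\,d_{BL}(\mu, \nu)}$.
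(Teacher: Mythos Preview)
Your overall architecture---smooth the test function by a Gaussian, split into a bounded--Lipschitz term and a deconvolution error, then optimize---is exactly what the paper does.  The difference is entirely in how the two pieces are estimated, and that is where your proposal runs into trouble.

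First, your Lipschitz bound for $g_\sigma$ is needlessly weak.  You use $\norm{\nabla\varphi_\sigma}_{L^1}\le\sqrt{n}/\sigma$, but in fact $g*\varphi_\sigma$ is $(C/\sigma)$-Lipschitz with a \emph{dimension-free} constant: for any $x,y$,
\[
\abs{g_\sigma(x)-g_\sigma(y)}\le \norm{g}_\infty\,\norm{\varphi_\sigma-\tau_{y-x}\varphi_\sigma}_1
= d_{TV}\bigl(N(0,\sigma^2 I_n),\,N(y-x,\sigma^2 I_n)\bigr)\le C\norm{x-y}/\sigma,
\]
since the total variation between two Gaussians with the same covariance depends only on the one-dimensional quantity $\norm{x-y}/\sigma$.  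The paper uses precisely this $1/t$ Lipschitz bound.

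Because you lose a factor $\sqrt{n}$ here, you are forced to seek a deconvolution bound that is $\sqrt{n}$ \emph{better} than what the paper uses: you want $d_{TV}(\mu,\mu*\varphi_\sigma)\le C\sigma\sqrt{n}$, whereas the paper simply quotes the Eldan--Klartag lemma $\norm{f-f*\varphi_t}_1\le cnt$.  Your route to the stronger bound, via the dimension-free translate inequality $d_{TV}(\mu,\tau_v\mu)\le C\norm{v}$, is equivalent to showing $\int_{\R^{n-1}}\sup_t f(t,x')\,dx'\le C$, i.e.\ $\int\abs{\partial_1 f}\le C$, uniformly over isotropic log-concave $f$ on $\R^n$.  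Your plan for this reduces it to bounding $\E\bigl[1/\sigma_1(X')\bigr]$ using only $\E\bigl[\sigma_1(X')^2\bigr]\le 1$; but that implication is false in general (Jensen goes the wrong way, and nothing prevents $\sigma_1(X')$ from being very small on a set of positive probability).  So the argument, as written, has a genuine gap at exactly the point you flagged as the main obstacle.

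The clean fix is to use the sharp $C/\sigma$ Lipschitz bound above; then you only need $d_{TV}(\mu,\mu*\varphi_\sigma)\le Cn\sigma$, which is the Eldan--Klartag estimate the paper cites as a black box.  If you want to prove the deconvolution step yourself rather than cite it, that is where the real work lies, and it is not captured by the conditional-variance heuristic you propose.
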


In this result and below, $C, c$, etc.\ denote positive constants
which are independent of $n$, $\mu$, and $\nu$, and whose values may
change from one appearance to the next.

\medskip

In the special case in which $n=1$ and $\nu = \gamma_1$, Brehm, Hinow,
Vogt and Voigt proved a similar comparison between total variation
distance and Kolmogorov distance $d_K$.

\begin{prop}[{\cite[Theorem 3.3]{BHVV}}]
  Let $\mu$ be a log-concave measure on $\R$. Then
  \[
  d_{TV}(\mu, \gamma_1) \le C \sqrt{\max \left\{ 1, \log (1/d_K(\mu,
      \gamma_1)) \right\} d_K(\mu, \gamma_1)}.
  \]
\end{prop}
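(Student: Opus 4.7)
\medskip
The plan is to upgrade the uniform bound $\norm{F_\mu - F_{\gamma_1}}_\infty = \eps$ (writing $\eps := d_K(\mu,\gamma_1)$) into an $L^2$ estimate on the density difference $f - \varphi$, and then convert this into an $L^1$ bound by Cauchy--Schwarz after truncating at a suitable scale. Throughout I may assume $\eps$ is smaller than a fixed universal constant, since otherwise the claimed inequality is trivial.

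\medskip
As a preliminary, I would extract from the log-concavity of $\mu$ and the smallness of $\eps$ two structural facts about $\mu$: a uniform density bound $\norm{f}_\infty \le C$, and universal exponential tails $\mu(\abs{X} > t) \le Ce^{-ct}$. The smallness of $\eps$ forces $\mu$ to place a definite positive mass on a bounded interval around the origin, since $F_\mu$ tracks $F_{\gamma_1}$ pointwise; a very peaked log-concave density would make $F_\mu$ jump essentially from $0$ to $1$ on a short interval and violate this tracking, yielding the bound on $\norm{f}_\infty$. Borell's lemma for one-dimensional log-concave measures then converts such a lower bound on the mass of a bounded interval into uniform exponential tails.

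\medskip
The heart of the argument is the $L^2$ estimate $\int (f - \varphi)^2 \ dx \le C\eps$, proved via integration by parts. Set $H := F_\mu - F_{\gamma_1}$, so that $\norm{H}_\infty = \eps$ and $H' = f - \varphi$ in the distributional sense. Since $f$ and $\varphi$ are both unimodal (by log-concavity), each is a function of bounded variation with $\mathrm{TV}(f) = 2\norm{f}_\infty$ and $\mathrm{TV}(\varphi) = 2\norm{\varphi}_\infty$; hence $f - \varphi$ has total variation at most a universal constant. Integration by parts with vanishing boundary terms gives
\[
\int (f - \varphi)^2 \ dx = -\int H \ d(f - \varphi),
\]
and the right-hand side is bounded in absolute value by $\norm{H}_\infty$ times the total variation of $f - \varphi$.

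\medskip
To conclude, split $d_{TV}(\mu,\gamma_1) = \int \abs{f - \varphi} \ dx$ over $[-T,T]$ and its complement. Cauchy--Schwarz combined with the $L^2$ estimate controls the inner integral by $C\sqrt{T\eps}$, while the exponential tails of $\mu$ and $\gamma_1$ bound the complement by $Ce^{-cT}$. Choosing $T$ proportional to $\log(1/\eps)$ balances the two contributions and yields $d_{TV}(\mu,\gamma_1) \le C\sqrt{\eps \log(1/\eps)}$. The most delicate step in this plan is the preliminary extraction of universal density and tail bounds for $\mu$ from log-concavity and the smallness of $d_K$ alone (in particular without an isotropy hypothesis on $\mu$); once these structural inputs are in hand, the integration-by-parts and truncation arguments are routine.
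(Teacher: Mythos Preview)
The paper does not give its own proof of this proposition: it is quoted verbatim as Theorem~3.3 of \cite{BHVV} and used only for context, so there is nothing in the present paper to compare your argument against.

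That said, your outline is a coherent and essentially correct route to the result. The key identity
\[
\int (f-\varphi)^2\,dx \;=\; -\int (F_\mu - \Phi)\,d(f-\varphi)
\]
is valid because $F_\mu-\Phi$ is absolutely continuous with derivative $f-\varphi$, and $f-\varphi$ is of bounded variation with $\mathrm{TV}(f-\varphi)\le 2\norm{f}_\infty + 2\norm{\varphi}_\infty$ by unimodality; the Cauchy--Schwarz-plus-truncation step is then routine. You are right to flag the preliminary step as the delicate one. A clean way to carry it out: from $d_K(\mu,\gamma_1)=\eps$ small one gets, say, $\mu([-2,2])\ge \gamma_1([-2,2])-2\eps\ge 0.9$ and $\mu([-1,1])\le \gamma_1([-1,1])+2\eps\le 0.7$. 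The first bound feeds Borell's lemma to give uniform exponential tails (hence finite variance), while the second forces $\Var(\mu)$ to be bounded below by a universal constant; the standard one-dimensional inequality $\norm{f}_\infty\le C/\sqrt{\Var(\mu)}$ for log-concave densities then yields $\norm{f}_\infty\le C$. With those inputs in place the rest of your plan goes through as written.
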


 Together with \eqref{E:bltv-classical}, Proposition \ref{T:tv-bl}
implies the following.

\begin{cor} 
  \label{T:weak-tv}
  On the family of isotropic log-concave probability measures on
  $\R^n$, the topologies of weak convergence and of total variation
  coincide.
\end{cor}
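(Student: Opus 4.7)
The plan is that this corollary should follow almost immediately from combining the two comparison inequalities already in hand, with no further work needed; the main task is simply to assemble them correctly.

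First, I would recall that the bounded-Lipschitz distance $d_{BL}$ metrizes weak convergence of probability measures on $\R^n$ (as already noted in the paper, citing Dudley). Thus the topology of weak convergence on the family of isotropic log-concave probability measures on $\R^n$ is the topology induced by $d_{BL}$, while the topology of total variation is the one induced by $d_{TV}$. To prove that these two topologies coincide on this family, it suffices to show that convergence to zero of either $d_{BL}(\mu_k,\mu)$ or $d_{TV}(\mu_k,\mu)$ implies convergence to zero of the other, for any sequence $(\mu_k)$ and limit $\mu$ within the family.

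For the direction weak $\Rightarrow$ TV, I would invoke Proposition \ref{T:tv-bl}: since $\mu_k$ and $\mu$ are isotropic log-concave,
\[
d_{TV}(\mu_k,\mu) \le C\sqrt{n\, d_{BL}(\mu_k,\mu)},
\]
and since $n$ is fixed while $d_{BL}(\mu_k,\mu)\to 0$, we conclude $d_{TV}(\mu_k,\mu)\to 0$. For the direction TV $\Rightarrow$ weak, the classical inequality \eqref{E:bltv-classical}, $d_{BL}(\mu_k,\mu)\le d_{TV}(\mu_k,\mu)$, gives the conclusion immediately and requires no convexity hypothesis at all.

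There is essentially no obstacle to overcome, since both inequalities have been established (or quoted) prior to the corollary; the only thing to be careful about is that Proposition \ref{T:tv-bl} genuinely applies, which requires both the sequence members and the limit to be isotropic and log-concave, and this is precisely the hypothesis that both convergent sequence and limit lie in the stated family. No claim is being made that the family is closed under weak limits, only that within it the two notions of convergence coincide.
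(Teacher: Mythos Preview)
Your proposal is correct and matches the paper's own argument exactly: the paper simply states that the corollary follows from Proposition~\ref{T:tv-bl} together with the classical inequality~\eqref{E:bltv-classical}, which is precisely the two-direction argument you give. Your added remark that both the sequence and the limit must lie in the family (so that Proposition~\ref{T:tv-bl} applies) is a valid clarification the paper leaves implicit.
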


Corollary \ref{T:weak-tv} will probably be unsurprising to experts,
but we have not seen it stated in the literature.

Proposition \ref{T:tv-bl} and Corollary \ref{T:weak-tv} are false
without the assumption of isotropicity.  For example, a sequence of
nondegenerate Gaussian measures $\{ \mu_k \}_{k \in \N}$ on $\R^n$ may
weakly approach a Gaussian measure $\mu$ supported on a
lower-dimensional subspace, but $d_{TV}(\mu_k, \mu) = 2$ for every
$k$.  It may be possible to extend Corollary \ref{T:weak-tv} to a
class of log-concave probability measures with, say, a nontrivial
uniform lower bound on the smallest eigenvalue of the covariance
matrix, but we will not pursue this here.

\medskip

The Kantorovitch duality theorem (see \cite[Theorem 5.10]{Villani2})
gives a dual formulation of the $L_1$ Wasserstein distance similar to
the formulation of total variation distance in \eqref{E:tv-dual}:
\begin{equation*}
  W_1(\mu,\nu)=\sup_g\abs{\int g \ d\mu-\int g \ d\nu},
\end{equation*}
where the supremum is over 1-Lipschitz functions $g:\R^n\to\R$.  An
immediate consequence is that for any probability measures $\mu$ and
$\nu$,
\begin{equation*}
  d_{BL}(\mu,\nu)\le W_1(\mu,\nu).
\end{equation*}
The following complementary inequality holds in the log-concave case.

\begin{prop} 
  \label{T:w1-bl}
  Let $\mu$ and $\nu$ be log-concave isotropic probability
  measures on $\R^n$. Then
  \begin{equation} \label{E:w1-bl}
  W_1(\mu, \nu) \le C \max \left\{ \sqrt{n}, \log 
      \left(\frac{\sqrt{n}}{d_{BL}(\mu,\nu)} \right) \right\}
    d_{BL}(\mu, \nu).
  \end{equation}
\end{prop}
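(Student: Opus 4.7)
The plan is to invoke Kantorovich duality to rewrite $W_1$ as a supremum over $1$-Lipschitz test functions, then reduce to the bounded-Lipschitz class by truncation, using a strong tail bound for isotropic log-concave measures to absorb the error. By Kantorovich duality, $W_1(\mu,\nu)=\sup_g\abs{\int g\,d\mu-\int g\,d\nu}$, where $g$ ranges over $1$-Lipschitz functions $g:\R^n\to\R$; replacing $g$ by $g-g(0)$ does not affect the difference of integrals, so I may assume $g(0)=0$, which gives $\abs{g(x)}\le\norm{x}$.

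For a parameter $R\ge 1$ to be chosen, truncate by setting $g_R(x):=\max(-R,\min(R,g(x)))$. Then $g_R$ is $1$-Lipschitz with $\norm{g_R}_\infty\le R$, hence $\norm{g_R}_{BL}\le R$, so $\abs{\int g_R\,d\mu-\int g_R\,d\nu}\le R\,d_{BL}(\mu,\nu)$. The pointwise truncation error satisfies $\abs{g(x)-g_R(x)}\le(\abs{g(x)}-R)_+\le(\norm{x}-R)_+$, so the total error is bounded by $\E(\norm{X}-R)_++\E(\norm{Y}-R)_+$ for $X\sim\mu$ and $Y\sim\nu$. Here isotropic log-concavity enters decisively through Paouris' inequality, which supplies an absolute constant $C_0$ with $\Prob(\norm{X}\ge s)\le e^{-s/C_0}$ for every $s\ge C_0\sqrt{n}$; integrating then yields $\E(\norm{X}-R)_+\le C_0\, e^{-R/C_0}$ whenever $R\ge C_0\sqrt{n}$.

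Setting $\eps:=d_{BL}(\mu,\nu)$ and choosing $R:=C_0\max\{\sqrt{n},\log(1/\eps)\}$ makes both $R\ge C_0\sqrt{n}$ and $e^{-R/C_0}\le\eps$ hold, so combining the two estimates gives
\[
W_1(\mu,\nu)\le R\,\eps+2C_0\, e^{-R/C_0}\le C\max\{\sqrt{n},\log(\sqrt{n}/\eps)\}\,\eps,
\]
where the last step absorbs the constant and uses $\log(1/\eps)\le\log(\sqrt{n}/\eps)$ together with $\max\{\sqrt{n},\log(1/\eps)\}\ge 1$ for $n\ge 1$. The main technical obstacle is the tail estimate. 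A sub-Gaussian bound of the form $\Prob(\norm{X}>s)\le e^{-cs^2/n}$ would only force $R$ of order $\sqrt{n\log(1/\eps)}$ and yield the strictly weaker bound $W_1\le C\sqrt{n\log(1/\eps)}\,\eps$; it is Paouris' sharp linear-in-$s$ exponential tail beyond the scale $\sqrt{n}$ that delivers the claimed rate.
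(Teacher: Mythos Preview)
Your proof is correct and follows essentially the same approach as the paper: truncate a $1$-Lipschitz test function at level $R$, bound the bounded-Lipschitz piece by $R\,d_{BL}$, control the truncation error via Paouris' tail estimate, and optimize over $R\ge C\sqrt{n}$. The only cosmetic differences are that you integrate the tail bound directly to get $\E(\norm{X}-R)_+\le C_0 e^{-R/C_0}$, whereas the paper uses Cauchy--Schwarz to bound $\E\norm{X}\ind{\norm{X}\ge R}\le\sqrt{n}\,e^{-cR}$, and you choose $R$ explicitly rather than invoking the paper's optimization lemma.
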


The following graph of $f(x) = \max
\left\{1,\log\left(\frac{1}{x}\right) \right\} x$ may be helpful in
visualizing the bounds in Proposition \ref{T:w1-bl} and the results
below.

\begin{center}\includegraphics[width=2in]{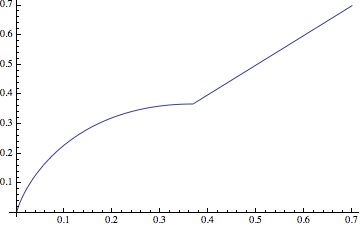}\end{center}
In particular, when $d_{BL}$ is moderate, we simply have $W_1 \le
C\sqrt{n} d_{BL}$. When $d_{BL}$ is small, the right hand side
of \eqref{E:w1-bl} is not quite linear in $d_{BL}$, but is
$o\bigl(n^{\eps/2} d_{BL}^{1-\eps}\bigr)$ for each $\eps > 0$.

From H\"older's inequality, it is immediate that if $p\le q$, then
$W_p(\mu,\nu)\le W_q(\mu,\nu)$.  In the log-concave case, we have the
following.

\begin{prop} 
  \label{T:wq-wp}
  Let $\mu$ and $\nu$ be isotropic log-concave probability measures on
  $\R^n$ and let $1 \le p < q$. Then
  \[
  W_q(\mu,\nu)^q \le C \left(\max \left\{ \sqrt{n},
      \log \left( \frac{\left(c \max\{q, \sqrt{n}\}\right)^q }{W_p(\mu, \nu)^{p}}
      \right) \right\} \right)^{q-p} W_p(\mu, \nu)^{p}.
  \]
\end{prop}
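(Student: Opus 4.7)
Let $\pi$ be a coupling of $\mu$ and $\nu$ with $\int \|x-y\|^p \, d\pi(x,y)$ arbitrarily close to $W_p(\mu,\nu)^p$, and write $(X, Y) \sim \pi$. Since $W_q(\mu,\nu)^q \le \E \|X-Y\|^q$, it suffices to bound the latter, and I would combine two complementary tail bounds on $\|X-Y\|$. Markov's inequality gives $\Prob(\|X-Y\| > t) \le W_p(\mu,\nu)^p / t^p$. On the other hand, Paouris's concentration theorem for isotropic log-concave measures supplies absolute constants $C_0, c_0 > 0$ such that $\Prob(\|X\| > t) \le C_0 e^{-c_0 t}$ for every $t \ge C_0 \sqrt{n}$, and likewise for $\|Y\|$; the union bound $\{\|X-Y\| > t\} \subseteq \{\|X\| > t/2\} \cup \{\|Y\| > t/2\}$ then yields $\Prob(\|X-Y\| > t) \le 2 C_0 e^{-c_0 t/2}$ for $t \ge 2 C_0 \sqrt{n}$.

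With a threshold $R$ to be chosen, I would split
\[
\E \|X-Y\|^q = \E\bigl[\|X-Y\|^q \ind{\|X-Y\| \le R}\bigr] + \E\bigl[\|X-Y\|^q \ind{\|X-Y\| > R}\bigr].
\]
The first piece is bounded by $R^{q-p} \E \|X-Y\|^p \le R^{q-p} W_p(\mu,\nu)^p$. For the second, integration by parts gives
\[
\E\bigl[\|X-Y\|^q \ind{\|X-Y\| > R}\bigr] = R^q \Prob(\|X-Y\|>R) + \int_R^\infty q t^{q-1} \Prob(\|X-Y\|>t)\, dt,
\]
and estimating the boundary term by Markov (bounded by $R^{q-p} W_p(\mu,\nu)^p$) and the integral by the Paouris tail combined with the incomplete-gamma estimate $\int_a^\infty u^{q-1} e^{-u}\, du \le 2 a^{q-1} e^{-a}$ (valid for $a \ge q$) yields a tail contribution of order $C_1 q R^{q-1} e^{-c_0 R/2}$.

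It then remains to choose $R$ so that this tail contribution is absorbed into $R^{q-p} W_p(\mu,\nu)^p$, which rearranges to $c_0 R / 2 \ge \log(C_1 q) + (p-1) \log R + p \log(1/W_p(\mu,\nu))$. I would take
\[
R = K \max\!\left\{ \sqrt{n},\ \log\!\left( \frac{(c\max\{q, \sqrt{n}\})^q}{W_p(\mu, \nu)^p} \right)\right\}
\]
with absolute constants $K, c$ large enough that three conditions hold simultaneously: $R \ge 2 C_0 \sqrt{n}$ (so Paouris's tail applies), $c_0 R / 2 \ge q$ (for the incomplete-gamma estimate), and the displayed inequality above. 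Each reduces to $R$ being at least a constant multiple of $\max\{\sqrt{n},\ q \log(\max\{q, \sqrt{n}\}),\ p \log(1 / W_p(\mu, \nu))\}$, and the identity $\log\bigl((c\max\{q,\sqrt{n}\})^q/W_p(\mu,\nu)^p\bigr) = q \log(c \max\{q, \sqrt{n}\}) + p \log(1/W_p(\mu,\nu))$ shows that the chosen $R$ dominates all of these. Summing the estimates for the two pieces gives $W_q(\mu,\nu)^q \le C R^{q-p} W_p(\mu,\nu)^p$, which is the claim.

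\textbf{Main obstacle.} The main technical step is verifying that a single choice of $R$ satisfies all three conditions above, while absorbing the $p, q$-dependent constants (for instance, the implicit $q/(q-p)$ factor that appears in related layer-cake calculations) into $C$ and $c$. The form $\max\{q, \sqrt{n}\}$ inside the logarithm in the statement is exactly what unifies the Paouris threshold $R \ge C_0 \sqrt{n}$ with the incomplete-gamma threshold $R \ge q/c_0$: when $q \le \sqrt{n}$ only the former matters, while when $q > \sqrt{n}$ both the $q$ in the maximum and the exponent $q$ in $(\cdot)^q$ are needed.
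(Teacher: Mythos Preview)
Your argument is correct and follows the same overall outline as the paper: take a coupling, split $\E\|X-Y\|^q$ at a threshold $R$, bound the truncated part by $R^{q-p}W_p^p$, control the tail via Paouris, and optimize over $R$. The difference lies in how the tail is handled. The paper applies Cauchy--Schwarz,
\[
\E\bigl[\|X-Y\|^q \ind{\|X-Y\|>R}\bigr]\le \sqrt{\Prob[\|X-Y\|>R]}\,\sqrt{\E\|X-Y\|^{2q}},
\]
and then invokes \emph{both} halves of Paouris's theorem: the tail bound for the first factor and the moment bound $(\E\|X-Y\|^{2q})^{1/2q}\le C\max\{q,\sqrt n\}$ for the second. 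This makes the quantity $(C\max\{q,\sqrt n\})^q$ appear directly, and the optimization over $R$ reduces to the one-line Lemma~\ref{T:min}. Your layer-cake route uses only the tail half of Paouris, at the price of an incomplete-gamma estimate and a more hands-on verification that a single $R$ meets all the side conditions. (One small correction: the bound $\int_a^\infty u^{q-1}e^{-u}\,du\le 2a^{q-1}e^{-a}$ needs $a\ge 2(q-1)$ rather than $a\ge q$; your choice of $R$ already absorbs this.) Both routes reach the stated inequality; the paper's is a bit shorter, yours is slightly more self-contained.
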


Because the bounded-Lipschitz distance metrizes the weak topology, and
convergence in $L_p$ Wasserstein distance implies convergence of
moments of order smaller than $p$, Propositions \ref{T:w1-bl} and
\ref{T:wq-wp} imply the following.

\begin{cor}
  \label{T:weak-moments}
  Let $\mu$, $\{\mu_k\}_{k \in \N}$ be isotropic log-concave
  probability measures on $\R^n$ such that $\mu_k \to \mu$ weakly.
  Then all moments of the $\mu_k$ converge to the corresponding
  moments of $\mu$.
\end{cor}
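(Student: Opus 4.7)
The plan is to bootstrap weak convergence to convergence in $W_q$ for every $q \geq 1$, using the chain Propositions \ref{T:w1-bl} and \ref{T:wq-wp} have already set up. Since the bounded-Lipschitz distance metrizes weak convergence, the hypothesis immediately gives $d_{BL}(\mu_k,\mu) \to 0$. First I would apply Proposition \ref{T:w1-bl}: with $n$ fixed, its right-hand side is at most a constant multiple of $\max\{\sqrt{n}, \log(\sqrt{n}/d_{BL}(\mu_k,\mu))\}\, d_{BL}(\mu_k,\mu)$, and since $x \log(1/x) \to 0$ as $x \to 0^+$, this forces $W_1(\mu_k,\mu) \to 0$.

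Next, for each fixed integer $q \geq 2$, I would apply Proposition \ref{T:wq-wp} with $p=1$. With $n$ and $q$ fixed and $W_1(\mu_k,\mu) \to 0$, the right-hand side is bounded by a constant (depending on $n$ and $q$) times $\bigl(\log(1/W_1(\mu_k,\mu))\bigr)^{q-1} W_1(\mu_k,\mu)$, which tends to zero because any positive power of a logarithm is dominated by any positive power of its argument. Hence $W_q(\mu_k,\mu) \to 0$ for every integer $q$, and by monotonicity of $W_q$ in $q$ this extends to all real $q \geq 1$.

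Finally, since $W_q$ metrizes the topology of weak convergence together with convergence of moments of order $q$, the fact that $W_q(\mu_k,\mu) \to 0$ for arbitrarily large $q$ delivers convergence of $\int \norm{x}^q\,d\mu_k \to \int \norm{x}^q\,d\mu$ for every $q$. This in turn gives uniform integrability, under the $\mu_k$, of every continuous function of polynomial growth; combined with weak convergence, this yields convergence of $\int p(x)\,d\mu_k \to \int p(x)\,d\mu$ for every polynomial $p$, which is the desired convergence of all moments.

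There is no real obstacle once the two propositions are in hand; the only point requiring mild care is verifying that the logarithmic blow-ups on the right-hand sides of Propositions \ref{T:w1-bl} and \ref{T:wq-wp} really are absorbed by the small factor of $d_{BL}$, respectively $W_1$, tending to zero. This is exactly where the log-concavity assumption, which powers both propositions, is doing its work.
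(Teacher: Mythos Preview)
Your argument is correct and is precisely the route the paper takes: the paper's proof consists of the single sentence preceding the corollary, observing that Propositions \ref{T:w1-bl} and \ref{T:wq-wp} convert $d_{BL}$-convergence into $W_q$-convergence for every $q$, and that $W_q$-convergence carries convergence of moments of order below $q$. You have simply spelled out those steps in detail, including the routine verification that the logarithmic factors are harmless.
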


The following, known as the Csisz\'ar--Kullback--Pinsker inequality,
holds for any probability measures $\mu$ and $\nu$:
\begin{equation}\label{E:tvh-classical} 
  d_{TV}(\mu,\nu) \le \sqrt{2 H(\mu \mid \nu)}.
\end{equation}
(See \cite{BoVi} for a proof, generalizations, and original
references.)  Unlike the other notions of distance considered above,
$H(\cdot\mid\cdot)$ is not a metric, and $H(\mu\mid\nu)$ can only be
finite if $\mu$ is absolutely continuous with respect to $\nu$.
Nevertheless, it is frequently used to quantify convergence;
\eqref{E:tvh-classical} shows that convergence in relative entropy is
stronger than convergence in total variation.  Convergence in relative
entropy is particularly useful in quantifying convergence to the
Gaussian distribution, and it is in that setting that
\eqref{E:tvh-classical} can be essentially reversed under an
assumption of log-concavity.

\begin{prop} 
  \label{T:h-tv} 
  Let $\mu$ be an isotropic log-concave probability measure on $\R^n$,
  and let $\gamma_n$ denote the standard Gaussian distribution on
  $\R^n$. Then
  \[
  H(\mu \mid \gamma_n) \le C \max \left\{\log^2
    \left(\frac{n}{d_{TV}(\mu, \gamma_n)}\right), n \log (n + 1) \right\}
    d_{TV}(\mu, \gamma_n).
  \]
\end{prop}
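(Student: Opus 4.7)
My plan is to write $f = d\mu/d\gamma_n$, use the identity $\int(f-1)\,d\gamma_n = 0$ to rewrite
\[
H(\mu\mid\gamma_n) = \int (f\log f - f + 1)\,d\gamma_n,
\]
and bound the (pointwise nonnegative) integrand separately in three regions determined by a threshold $T \ge 1$ on $|\log f|$. The two key pointwise inequalities I would use are that $f\log f - f + 1 \le 1$ for $f \in [0,1]$ (with the convention $0\log 0 := 0$) and $f\log f - f + 1 \le |f-1|\,|\log f|$ for $f > 0$.

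First, I would handle the bulk region $\{e^{-T} < f \le e^T\}$, where $|\log f| \le T$ gives a contribution bounded by $T\int|f-1|\,d\gamma_n = T\,d_{TV}(\mu,\gamma_n)$. For the small-density region $\{f \le e^{-T}\}$, I would use the pointwise bound $\le 1$ together with the observation that any measurable $A\subseteq\{f\le e^{-T}\}$ satisfies $\mu(A) \le e^{-T}\gamma_n(A)$, and hence the inequality $\gamma_n(A)-\mu(A)\le d_{TV}(\mu,\gamma_n)/2$ forces $\gamma_n(A)\le d_{TV}(\mu,\gamma_n)/(2(1-e^{-T}))$; for $T\ge\log 2$ this contributes $O(d_{TV}(\mu,\gamma_n))$.

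The main obstacle is the large-density region $\{f > e^T\}$. Rewriting
\[
\int_{\{f>e^T\}}f\log f\,d\gamma_n = \int_{\{f>e^T\}}\log f\,d\mu,
\]
I would apply two nontrivial structural results for isotropic log-concave $\mu$: the pointwise density bound $\|f_\mu\|_\infty\le e^{C_1 n}$ (where $f_\mu$ denotes the Lebesgue density), which yields $\log f \le \tfrac{1}{2}\norm{x}^2 + C_2 n$ on $\R^n$ and forces $\{f>e^T\}\subseteq\{\norm{x}\ge \sqrt T\}$ once $T\ge 2C_2 n$; and Paouris' deviation inequality $\mu(\norm{X}\ge R)\le e^{-cR}$ for $R\ge C\sqrt n$. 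A layer-cake integration combining these gives $\int_{\{f>e^T\}}\log f\,d\mu\le CT e^{-c\sqrt T}$ provided $T\ge Cn$.

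Finally I would choose $T = C_0\max\{n\log(n+1),\,\log^2(n/d_{TV}(\mu,\gamma_n))\}$ with $C_0$ large enough that both $T\ge Cn$ and $e^{-c\sqrt T}\le d_{TV}(\mu,\gamma_n)$. Then the large-density contribution is also at most $CT\,d_{TV}(\mu,\gamma_n)$, and summing the three regions yields the stated bound. I expect the hardest step to be this tail estimate: the matching condition $e^{-c\sqrt T}\le d_{TV}(\mu,\gamma_n)$ between Paouris' tail decay and the required smallness of $d_{TV}(\mu,\gamma_n)$ is exactly what forces $T\gtrsim \log^2(1/d_{TV}(\mu,\gamma_n))$, so the specific $\log^2$ in the final bound can be traced back to the $\sqrt T$ exponent produced by combining the density bound with Paouris' inequality.
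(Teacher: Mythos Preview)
Your proposal is correct in outline and follows the same overall architecture as the paper's proof: split according to the size of the Radon--Nikodym density $f = d\mu/d\gamma_n$, control the bulk by a linear bound on $x\log x$ against $|x-1|$, and control the large-$f$ tail by combining the pointwise bound $\log f(x) \le \log\|f_\mu\|_\infty + \tfrac{n}{2}\log(2\pi) + \tfrac{1}{2}\|x\|^2$ with Paouris' deviation inequality. The appearance of $\log^2$ and of $n\log(n+1)$ in the final bound arises in both arguments from exactly the mechanism you identify.

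There are two genuine differences worth noting. First, for the tail $\{f > e^T\}$ the paper applies Cauchy--Schwarz and then invokes the Bobkov--Madiman bound $\Var(\log f_\mu(Y)) \le Cn$ to control $\E(\log W)^2$; your direct pointwise estimate followed by a layer-cake integration of $\|x\|^2$ against the Paouris tail bypasses Bobkov--Madiman entirely, which is a nice simplification. Second, by working with the nonnegative integrand $f\log f - f + 1$ you are forced to treat the region $\{f \le e^{-T}\}$ separately; the paper instead uses $X\log X \le 0$ on $\{X \le 1\}$ and simply discards that region, which is a bit cleaner.

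One small correction: the unconditional bound on the Lebesgue density of an isotropic log-concave measure is $\|f_\mu\|_\infty \le e^{Cn\log n}$ (e.g.\ $\|f_\mu\|_\infty \le 2^{8n} n^{n/2}$), not $e^{C_1 n}$; the latter is equivalent to the hyperplane conjecture. Consequently the constraint you need is $T \ge C n\log n$ rather than $T \ge Cn$. Your final choice $T = C_0\max\{n\log(n+1), \log^2(n/d_{TV})\}$ already satisfies this, so the argument goes through unchanged once the density bound is stated correctly.
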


The proof of Proposition \ref{T:h-tv} uses a rough bound on the
isotropic constant $L_f = \norm{f}_\infty^{1/n}$ of the density $f$ of
$\mu$.  Better estimates are available but only result in a change in
the absolute constants in our bound.  In the case that the isotropic
constant is bounded independent of $n$ (e.g.\ if $\mu$ is the uniform
measure on an unconditional convex body, or if the hyperplane
conjecture is proved), then the bound above can be improved slightly
to
  \[
  H(\mu \mid \gamma_n) \le C \max \left\{\log^2
    \left(\frac{n}{d_{TV}(\mu, \gamma_n)}\right), n \right\}
  d_{TV}(\mu, \gamma_n).
  \]

\begin{cor} 
  \label{T:weak-tv-h}
  Let $\{\mu_k\}_{k\in \N}$ be isotropic log-concave probability
  measures on $\R^n$.  The following are equivalent:
  \begin{enumerate}
  \item $\mu_k \to \gamma_n$ weakly.
  \item $\mu_k \to \gamma_n$ in total variation.
  \item $H(\mu_k \mid \gamma_n) \to 0$.
  \end{enumerate}
\end{cor}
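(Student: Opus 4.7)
The plan is to close a cycle of implications (c) $\Rightarrow$ (b) $\Rightarrow$ (a) $\Rightarrow$ (b) $\Rightarrow$ (c), drawing each arrow from a result already in hand, so there is essentially no new work beyond checking that the bounds actually tend to zero in the right order.

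First, (c) $\Rightarrow$ (b) is immediate from the Csisz\'ar--Kullback--Pinsker inequality \eqref{E:tvh-classical} applied with $\nu = \gamma_n$, and (b) $\Rightarrow$ (a) is immediate from \eqref{E:bltv-classical} together with the fact (cited in the introduction) that $d_{BL}$ metrizes weak convergence. The converse (a) $\Rightarrow$ (b) is the content of Corollary \ref{T:weak-tv}, which applies because $\gamma_n$ is itself an isotropic log-concave measure on $\R^n$, so the entire sequence $\{\mu_k\} \cup \{\gamma_n\}$ lies in the class to which that corollary applies.

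The one implication requiring a brief verification is (b) $\Rightarrow$ (c), for which I would invoke Proposition \ref{T:h-tv}. Assuming $d_{TV}(\mu_k, \gamma_n) \to 0$, that proposition gives
\[
H(\mu_k \mid \gamma_n) \le C \max \left\{\log^2\!\left(\tfrac{n}{d_{TV}(\mu_k, \gamma_n)}\right),\ n \log(n+1) \right\} d_{TV}(\mu_k, \gamma_n).
\]
Since $n$ is fixed, the second term in the max contributes $C n \log(n+1) \, d_{TV}(\mu_k, \gamma_n) \to 0$, while for the first term one only needs $x \log^2(n/x) \to 0$ as $x \to 0^+$, which holds because $\log^2$ grows only polylogarithmically. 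Hence $H(\mu_k \mid \gamma_n) \to 0$.

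There is no real obstacle here; the only subtlety worth flagging is that Proposition \ref{T:h-tv} is stated with $n$ dependence in both branches of the max, so one must observe that for fixed dimension $n$ the bound is genuinely $o(1)$ as $d_{TV} \to 0$. All the hard analytic content---the reverse bound of total variation by bounded-Lipschitz distance, and the reverse bound of entropy by total variation---has already been absorbed into Corollary \ref{T:weak-tv} and Proposition \ref{T:h-tv}, so the corollary itself is just a bookkeeping assembly of these pieces.
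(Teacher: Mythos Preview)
Your proposal is correct and matches the paper's intended argument: the corollary is stated without proof immediately after Proposition~\ref{T:h-tv}, implicitly relying on exactly the cycle you spell out---\eqref{E:tvh-classical} for (c)$\Rightarrow$(b), \eqref{E:bltv-classical} for (b)$\Rightarrow$(a), Corollary~\ref{T:weak-tv} for (a)$\Rightarrow$(b), and Proposition~\ref{T:h-tv} for (b)$\Rightarrow$(c). Your check that the right-hand side of the bound in Proposition~\ref{T:h-tv} tends to zero for fixed $n$ is the only point requiring comment, and you handle it correctly.
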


It is worth noting that Proposition \ref{T:h-tv} implies that B.\
Klartag's central limit theorem for convex bodies (proved in
\cite{Klartag1,Klartag2} in total variation) also holds in the \emph{a
  priori} stronger sense of entropy, with a polynomial rate of
convergence.

\section{Proofs of the results}

The proof of Proposition \ref{T:tv-bl} uses the following
deconvolution result of R.\ Eldan and B.\ Klartag.

\begin{lemma}[{\cite[Proposition 10]{ElKl}}]
  \label{T:ElKl}
  Suppose that $f$ is the density of an isotropic log-concave
  probability measure on $\R^n$, and for $t > 0$ define
  \[
  \varphi_t(x) = \frac{1}{(2\pi t^2)^{n/2}} e^{-\norm{x}^2 / 2t^2}.
  \]
  Then
  \[
  \norm{f - f * \varphi_t }_1 \le c n t.
  \]
\end{lemma}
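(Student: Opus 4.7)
The plan is to reduce Lemma \ref{T:ElKl} to a quantitative bound on how much an isotropic log-concave density can change under a single translation, and then average that estimate over the Gaussian kernel. By the triangle inequality applied inside the convolution,
\[
\int \abs{f(x) - (f * \varphi_t)(x)}\, dx \le \int \varphi_t(y) \left( \int \abs{f(x) - f(x-y)}\, dx \right) dy.
\]
The target claim is that for any isotropic log-concave density $f$ on $\R^n$ and any $y \in \R^n$,
\[
\norm{f - f(\cdot - y)}_1 \le C \sqrt{n}\, \norm{y}.
\]
Granted this, $\E \norm{G} \le t \sqrt{n}$ for $G$ with density $\varphi_t$ yields $\norm{f - f * \varphi_t}_1 \le C \sqrt{n} \cdot t \sqrt{n} = Cnt$.

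The translation estimate I would prove via level sets. Using the layer-cake identity $\abs{f(x) - f(x-y)} = \int_0^\infty \abs{\ind{f(x) > s} - \ind{f(x-y) > s}}\, ds$ together with the convexity of the superlevel sets $K_s := \{f > s\}$ (which comes from log-concavity of $f$), one is reduced to estimating $\operatorname{vol}(K_s \mathbin{\triangle} (K_s + y))$ for each $s$, where $\mathbin{\triangle}$ denotes symmetric difference. A direct fiber-by-fiber computation in the direction $\hat{y} := y/\norm{y}$ shows that for any convex body $K$ and any $y$,
\[
\operatorname{vol}(K \mathbin{\triangle} (K+y)) \le 2 \norm{y}\, \operatorname{vol}_{n-1}(\pi_{y^\perp}(K)),
\]
where $\pi_{y^\perp}$ denotes orthogonal projection. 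Integrating in $s$ and swapping order of integration then gives
\[
\int \abs{f(x) - f(x-y)}\, dx \le 2 \norm{y} \int_{y^\perp} \sup_{t \in \R} f(z + t \hat{y})\, dz,
\]
so the task reduces to showing that for every unit vector $e \in \R^n$,
\[
\int_{e^\perp} \sup_{t \in \R} f(z + te)\, dz \le C \sqrt{n}.
\]

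This last estimate is the heart of the matter and I expect it to be the main obstacle. The factor $\sqrt{n}$ is sharp: the isotropic cube $[-\sqrt{3}, \sqrt{3}]^n$ with $e$ a diagonal direction already saturates it, so the argument must use both log-concavity and isotropicity in an essential way. Using the classical one-dimensional fact that a log-concave function $h : \R \to \R_+$ with integral $A$ and standard deviation $\tau$ satisfies $\sup h \le C A / \tau$, one rewrites $\sup_t f(z + te) \le C f_e(z)/\tau(z)$, where $f_e$ is the marginal density on $e^\perp$ and $\tau(z)$ is the conditional standard deviation of $X \cdot e$ given $\pi_{e^\perp}(X) = z$. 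Isotropicity immediately gives $\int f_e(z)\, \tau(z)^2\, dz \le \Var(X \cdot e) = 1$, but converting this second-moment estimate into the desired $\sqrt{n}$-type control on $\int f_e/\tau$ is where the real work lies. This is where I would expect to invoke the deeper techniques of Eldan and Klartag --- in particular, refined concentration and thin-shell-type information about the marginals of isotropic log-concave measures.
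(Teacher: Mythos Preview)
The paper does not prove this lemma; it is quoted as Proposition~10 of Eldan--Klartag and used as a black box in the proof of Proposition~\ref{T:tv-bl}. So there is no in-paper argument to compare against.

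Your reduction is sound: the convolution--triangle inequality step, the layer-cake identity, the symmetric-difference bound $\operatorname{vol}(K\mathbin{\triangle}(K+y))\le 2\norm{y}\operatorname{vol}_{n-1}(\pi_{y^\perp}(K))$ for convex $K$, and the identification of the key quantity $\int_{e^\perp}\sup_{t}f(z+te)\,dz$ are all correct as written. (Indeed, by the unimodality of $t\mapsto f(z+te)$ this quantity equals $\tfrac{1}{2}\int_{\R^n}\abs{\partial_e f}$, so your target is equivalent to a directional total-variation bound $\int\abs{\partial_e f}\le C\sqrt{n}$.) You are also right that the $\sqrt{n}$ is sharp, and that the route via $\sup_t f(z+te)\le C f_e(z)/\tau(z)$ together with $\E\,\tau(Z)^2\le 1$ does not close the argument: a second-moment upper bound on $\tau$ gives no control of $\E[1/\tau(Z)]$.

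The genuine gap is that you stop precisely at the nontrivial step. Saying you would ``invoke the deeper techniques of Eldan and Klartag'' is not a proof; it is a citation of the very result you set out to establish. As written, the proposal is an outline that correctly isolates the difficulty but does not resolve it. To complete the argument you would need either the actual Eldan--Klartag proof of the bound $\int_{e^\perp}\sup_t f(z+te)\,dz\le C\sqrt{n}$, or an independent proof of it, and neither is supplied here.
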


\begin{proof}[Proof of Proposition \ref{T:tv-bl}]
  Let $g \in C(\R^n)$ with $\norm{g}_\infty \le 1$.  For $t > 0$, let
  $g_t = g * \varphi_t$, where $\varphi_t$ is as in Lemma
  \ref{T:ElKl}. It follows from Young's inequality that
  $\norm{g_t}_\infty \le 1$ and that $g_t$ is $1/t$-Lipschitz.  We
  have
  \begin{align*}
    \abs{ \int g \ d\mu - \int g \ d\nu} & 
    \le \abs{ \int (g-g_t) \ d\mu }
    + \abs{ \int g_t \ d\mu - \int g_t \ d \nu}
    + \abs{ \int (g_t-g) \ d\nu }.
  \end{align*}
  It is a classical fact due to C.\ Borell \cite{Borell} that a
  log-concave probability measures which is not supported on a proper
  affine subspace of $\R^n$ has a density.  If $f$ is the density of
  $\mu$, then by Lemma \ref{T:ElKl},
  \[
  \abs{\int (g - g_t) \ d\mu} = \abs{\int g (f - f*\varphi_t)}
  \le \norm{f - f*\varphi_t}_1 \le cnt,
  \]
  and
  \[
  \abs{\int (g - g_t) \ d\nu} \le cnt
  \]
  similarly.  Furthermore,
  \[
  \abs{\int g_t \ d\mu - \int g_t \ d\nu} \le d_{BL}(\mu, \nu)
  \norm{g_t}_{BL} \le d_{BL}(\mu, \nu) \max\{1, 1/t\}.
  \]
  Combining the above estimates and taking the supremum over $g$
  yields
  \[
  d_{TV}(\mu, \nu) \le d_{BL}(\mu, \nu) \max \{1, 1/t\} + cnt
  \]
  for every $t > 0$.  The proposition follows by picking $t =
  \sqrt{d_{BL}(\mu, \nu) / 2n} \le 1$.
\end{proof}

The remaining propositions all depend in part on the following deep
concentration result due to G.\ Paouris.

\begin{prop}[\cite{Paouris}]
  \label{T:Paouris}
  Let $X$ be an isotropic log-concave random vector in $\R^n$. Then
  \[
  \Prob \left[ \norm{X} \ge R \right] \le e^{-cR}
  \]
  for every $R \ge C \sqrt{n}$, and
  \[
  \left(\E \norm{X}^{p}\right)^{1/p} \le C \max\{\sqrt{n}, p\}
  \]
  for every $p \ge 1$.
\end{prop}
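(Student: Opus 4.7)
The plan is to prove the two inequalities in tandem, observing that they are essentially equivalent via Markov's inequality. Given the moment bound $(\E\norm{X}^p)^{1/p} \le C\max\{\sqrt{n}, p\}$, for $R \ge C'\sqrt{n}$ one writes
\[
\Prob[\norm{X} \ge R] \le \frac{\E\norm{X}^p}{R^p} \le \left(\frac{Cp}{R}\right)^p
\]
and optimizes by choosing $p$ proportional to $R$ (which stays in the regime $p \ge \sqrt{n}$), obtaining $\Prob[\norm{X} \ge R] \le e^{-cR}$. Recovering moments from tails is standard integration by parts. I would therefore concentrate on the moment estimate.

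For $p \in [1,2]$ the bound is immediate from isotropicity: Jensen's inequality yields $(\E\norm{X}^p)^{1/p} \le (\E\norm{X}^2)^{1/2} = \sqrt{n}$. The real content is for $p > 2$, where a direct expansion of $\E\norm{X}^{2k}$ produces fourth and higher moments of the coordinates of $X$ and loses additional factors of $n$, so a finer tool is needed.

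The route I would follow is Paouris's, via the $L_q$-centroid bodies $Z_q(\mu)$ of $\mu$, defined through their support functions
\[
h_{Z_q(\mu)}(y) := \left(\E\abs{\indprod{X}{y}}^q\right)^{1/q}.
\]
Isotropicity makes $Z_2(\mu)$ a Euclidean ball. Integration in polar coordinates expresses $\E\norm{X}^q$ in terms of spherical averages of $h_{Z_q(\mu)}$, which in turn are controlled by the volume of $Z_q(\mu)$ via Urysohn-type inequalities. The deep input is the sharp volume bound $|Z_q(\mu)|^{1/n} \le C\sqrt{q/n}$ in the appropriate range of $q$, proved by an induction on the dimension that exploits log-concavity together with the Brunn--Minkowski inequality.

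The main obstacle is precisely this volume estimate for $Z_q(\mu)$; it is what gives the theorem its depth. A tempting alternative would be to first prove the small-ball bound $\Prob[\norm{X} \le c\sqrt{n}] \le 1/2$ directly and then apply Borell's lemma to the log-concave one-dimensional law of $\norm{X}$ to obtain exponential concentration around $c\sqrt{n}$; but the small-ball estimate is essentially as hard as what we are trying to prove, and in fact appears as one of the central consequences of Paouris's argument. I do not see a substantially shorter route.
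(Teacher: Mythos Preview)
The paper does not prove this proposition at all: it is quoted as a black-box result from Paouris's work \cite{Paouris}, introduced with the phrase ``the following deep concentration result due to G.\ Paouris,'' and then used as an input in the proofs of Propositions~\ref{T:w1-bl}, \ref{T:wq-wp}, and \ref{T:h-tv}. So there is no in-paper proof to compare your proposal against.

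Your sketch is a fair high-level account of Paouris's original strategy (equivalence of the tail and moment forms via Markov/integration, triviality for $p\le 2$, and the genuine depth residing in the $L_q$-centroid-body geometry). One technical point to be careful about: the step ``integration in polar coordinates expresses $\E\norm{X}^q$ in terms of spherical averages of $h_{Z_q(\mu)}$'' is not literally correct---$\E\norm{X}^q$ is not a simple spherical average of $h_{Z_q(\mu)}$. Paouris's argument instead passes through a comparison between $(\E\norm{X}^q)^{1/q}$ and quantities like the mean width or volume radius of $Z_q(\mu)$, and the heart of the proof is showing that for $q\lesssim\sqrt{n}$ the body $Z_q(\mu)$ is in an approximately Euclidean position (via Dvoretzky-type estimates combined with the $\psi_2$-behavior of linear functionals in certain directions), rather than a direct volume bound. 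Your overall assessment that there is no short alternative route is accurate.
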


The following simple optimization lemma will also be used in the
remaining proofs.

\begin{lemma} 
  \label{T:min}
  Given $A, B, M, k > 0$,
  \[
  \inf_{t \ge M} \left( A t^k + B e^{-t}\right)
  \le A \left( 1 + \left(\max \left\{ M, \log(B/A) \right\}\right)^k \right).
  \]
\end{lemma}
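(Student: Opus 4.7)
The plan is to choose the single evaluation point $t_* := \max\{M,\log(B/A)\}$ and verify that $A t_*^k + B e^{-t_*}$ is bounded by the right-hand side; since $t_* \ge M$, this is automatically an upper bound on the infimum.

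I would split into two cases based on whether $\log(B/A) \ge M$. In the case $\log(B/A) \ge M$, take $t_* = \log(B/A)$; then $B e^{-t_*} = B \cdot (A/B) = A$, and the first term is $A(\log(B/A))^k = A t_*^k$, giving $A t_*^k + B e^{-t_*} = A(1 + t_*^k)$, which equals exactly the right-hand side. In the case $\log(B/A) < M$, take $t_* = M$; then $B e^{-M} \le B \cdot (A/B) = A$ because $e^{-M} \le e^{-\log(B/A)} = A/B$, while $A t_*^k = A M^k$, so again $A t_*^k + B e^{-t_*} \le A(1 + M^k)$, matching the right-hand side.

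Combining the two cases yields the claimed bound. This is a one-line calculation once the right $t_*$ is chosen, so there is no real obstacle; the only content of the lemma is the observation that $t = \log(B/A)$ is the value that balances the two terms in $At^k + Be^{-t}$ up to the polynomial factor, and that truncating at $M$ when $\log(B/A) < M$ only costs an additional factor bounded by $A$ on the exponential term.
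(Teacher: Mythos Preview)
Your proof is correct and takes exactly the same approach as the paper: the paper's entire proof is the single sentence ``Set $t = \max\{M, \log(B/A)\}$,'' and you have simply written out the verification that this choice works. Your case analysis is accurate in both branches.
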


\begin{proof}
  Set $t = \max \{ M, \log (B/A) \}$.
\end{proof}

\begin{proof}[Proof of Proposition \ref{T:w1-bl}]
  Let $g:\R^n \to \R$ be $1$-Lipschitz and without loss of generality
  assume that $g(0) = 0$, so that $\abs{g(x)} \le \norm{x}$. For
  $R > 0$ define
  \[
  g_R(x) = \begin{cases} -R & \text{if } g(x) < -R, \\
    g(x) & \text{if } -R \le g(x) \le R, \\
    R & \text{if } g(x) > R, \end{cases}
  \]
  and observe that $\norm{g_R}_{BL} \le \max\{1,R\}$.  Let $X \sim \mu$
  and $Y \sim \nu$. Then
  \begin{align*}
    \abs{\E g(X) - \E g(Y)}
    & \le \E \abs{g_R(X) - g_R(Y)}     
    + \E \abs{g(X) - g_R(X)} + \E \abs{g(Y) - g_R(Y)} \\
    & \le \max\{1, R\} d_{BL}(\mu,\nu) + \E \norm{X} \ind{\norm{X} \ge R}
    + \E \norm{Y} \ind{\norm{Y} \ge R}.
    \end{align*}
    By the Cauchy--Schwarz inequality and Proposition \ref{T:Paouris},
    \[
    \E \norm{X} \ind{\norm{X} \ge R}
    \le \sqrt{n \Prob\left[\norm{X} \ge R\right]}
    \le \sqrt{n} e^{-cR}
    \]
    for $R \ge C\sqrt{n}$, and the last term is bounded similarly.
    Combining the above estimates and taking the supremum over $g$
    yields
    \[
    W_1(\mu, \nu) \le \max\{1, R\} d_{BL}(\mu, \nu) + 2\sqrt{n} e^{-cR}
    \]
    for every $R \ge C \sqrt{n}$.  The proposition follows using Lemma
    \ref{T:min}.
\end{proof}

\begin{proof}[Proof of Proposition \ref{T:wq-wp}]
  Let $(X,Y)$ be a coupling of $\mu$ and $\nu$ on $\R^n \times \R^n$. Then for each $R > 0$,
  \begin{align*}
    \E \norm{X-Y}^q 
    & \le R^{q-p}\E \left[\norm{X - Y}^p
      \ind{\norm{X-Y} \le R}\right] + \sqrt{\Prob \left[ \norm{X-Y} \ge
        R \right] \E \norm{X-Y}^{2q}}.
  \end{align*}
  By Proposition \ref{T:Paouris},
  \[
  \Prob\left[\norm{X-Y} \ge R\right]
  \le \Prob\left[\norm{X} \ge R/2\right] + \Prob\left[\norm{Y} \ge
    R/2\right]
  \le e^{-cR}
  \]
  when $R \ge C \sqrt{n}$, and 
  \[
  \left(\E \norm{X-Y}^{2q} \right)^{1/2q} 
  \le \left(\E \norm{X}^{2q} \right)^{1/2q} + \left(\E \norm{Y}^{2q}\right)^{1/2q}
  \le C \max\{q, \sqrt{n}\},
  \]
  so that
  \[\E \norm{X-Y}^q
  \le R^{q-p} \E \norm{X-Y}^p
  + \left(C \max \{ q, \sqrt{n} \}\right)^q e^{-cR}
  \]
  for every $R \ge C \sqrt{n}$.  Taking the infimum over couplings and then applying Lemma
  \ref{T:min} completes the proof.
\end{proof}

The proof of Proposition \ref{T:h-tv} uses the following variance
bound which follows from a more general concentration inequality due
to Bobkov and Madiman.

\begin{lemma}[{see \cite[Theorem 1.1]{BoMa}}]
  \label{T:BoMa}
  Suppose that $\mu$ is an isotropic log-concave probability measure
  on $\R^n$ with density $f$, and let $Y \sim \mu$. Then
  \[
  \Var \bigl(\log f(Y)\bigr) \le C n.
  \]
\end{lemma}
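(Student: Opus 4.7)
The plan is to deduce the variance bound as a direct corollary of the more general concentration inequality of Bobkov and Madiman. Their Theorem~1.1 asserts that, for an isotropic log-concave probability measure $\mu$ on $\R^n$ with density $f$ and $Y\sim\mu$, the random variable $\log f(Y)$ concentrates around its mean with a sub-exponential tail of scale $\sqrt{n}$: there exist absolute constants $C_1,c>0$ such that
\[
\Prob\bigl(\abs{\log f(Y)-\E\log f(Y)}\ge t\bigr)\le C_1\, e^{-ct/\sqrt{n}}
\]
for every $t\ge 0$.

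Granted this, the variance estimate is a one-line layer-cake integration. Writing $W=\log f(Y)-\E\log f(Y)$, we have
\[
\Var\bigl(\log f(Y)\bigr)=\E W^{2}=\int_{0}^{\infty}2t\,\Prob(\abs{W}\ge t)\,dt\le 2C_1\int_{0}^{\infty}t\,e^{-ct/\sqrt{n}}\,dt=\frac{2C_1}{c^{2}}\,n,
\]
which gives the claim with $C=2C_1/c^2$.

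The substantive obstacle is thus the concentration inequality itself, which is where all the work lies in \cite{BoMa}; its proof exploits the convexity of $V=-\log f$ via the functional form of the Brunn--Minkowski inequality (Pr\'ekopa--Leindler), combined with Paouris-type norm concentration to control the linear part of $V$. If one instead wanted a more self-contained route, a natural attempt would be to apply the Brascamp--Lieb variance inequality
\[
\Var_\mu(V)\le \E_\mu \indprod{(\nabla^{2}V)^{-1}\nabla V}{\nabla V},
\]
and to show the right-hand side is $O(n)$ uniformly over isotropic log-concave $\mu$; but obtaining this estimate without further structural hypotheses is essentially as deep as the concentration inequality it would replace, so for the purposes of this note quoting \cite{BoMa} is both simpler and sharper.
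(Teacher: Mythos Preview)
Your proposal is correct and matches the paper's approach: the paper does not give an independent proof of this lemma but simply cites the Bobkov--Madiman concentration inequality, and the variance bound follows from it by exactly the layer-cake computation you wrote. Your additional remarks on Brascamp--Lieb are accurate but extraneous to what the paper needs.
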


\begin{proof}[Proof of Proposition \ref{T:h-tv}]
  Let $f$ be the density of $\mu$, and let $\varphi(x) = (2\pi)^{-n/2}
  e^{-\norm{x}^2/2}$ be the density of $\gamma_n$.  Let $Z \sim
  \gamma_n$, $Y \sim \mu$, $X = \frac{f(Z)}{\varphi(Z)}$, and $W =
  \frac{f(Y)}{\varphi(Y)}$.  Then
  \[
  H(\mu \mid \gamma_n) = \E X \log X.
\]
In general, if $\mu$ and $\nu$ have densities $f_\mu$ and $f_\nu$, it
is an easy exercise to show that $d_{TV}(\mu,\nu)=\int
\abs{f_\mu-f_\nu}$; from this, it follows that
\[
  d_{TV}(\mu, \gamma_n) = \E \abs{X-1} = \frac{1}{2} \E (X-1) \ind{X \ge 1}.
  \]

  Let $h(x) = x \log x$.  Since $h$ is convex and $h(1) = 0$, we have
  that $h(x) \le a (x-1)$ for $1 \le x \le R$ as long as $a$ is such that $h(R) \le a
  (R-1)$.  Let $R \ge 2$, so that $\frac{R}{R-1} \le 2$.  Then
  \[
  h(R) = R \log R \le 2 (R-1) \log R = a(R-1)
  \]
  for $a = 2 \log R$.  Thus
  \begin{align*}
  \E X \log X 
  & \le \E (X \log X ) \ind{X \ge 1} \\
  & \le a \E (X-1) \ind{X \ge 1} + \E (X \log X) \ind{X \ge R} \\
  & = (\log R) d_{TV}(\mu, \gamma_n) + \E (X \log X) \ind{X \ge R}.
  \end{align*}

  The Cauchy--Schwarz inequality implies that
  \[
  \E (X \log X) \ind{X \ge R} 
  = \E (\log W) \ind{W \ge R}
  \le \sqrt{\E (\log W)^2} \sqrt{\Prob \left[ W \ge R \right]}.
  \]
  By the $L^2$ triangle inequality, we have
  \begin{align*}
    \sqrt{\E (\log W)^2} &= \sqrt{ \E \abs{\log f(Y) - \log \varphi(Y)}^2} \\
    & \le \sqrt{\E \abs{\log f(Y)}^2}
      + \sqrt{\E \abs{\log \varphi(Y)}^2},
  \end{align*}
  and by Proposition \ref{T:Paouris},
  \begin{align*}
    \E \abs{\log \varphi(Y)}^2
    & = \E \left( \frac{n}{2} \log 2\pi + \frac{\norm{Y}^2}{2}
    \right)^2 \le C n^2.
  \end{align*}
  By Lemma \ref{T:BoMa},
  \[
  \E \abs{\log f(Y)}^2 \le \left( \E \log f(Y)\right)^2 + C n.
  \]
  Recall that the \textbf{entropy} of $\mu$ is
  \[
  - \int f(y) \log f(y) \ dy = - \E \log f(Y) \ge 0,
  \]
  and that $\gamma_n$ is the maximum-entropy distribution with
  identity covariance, so that
  \[
  \left( \E \log f(Y)\right)^2 \le \left( \E \log \varphi(Z) \right)^2
  = \left( n \log \sqrt{2\pi e} \right)^2.
  \]
  Thus
  \[
  \sqrt{\E (\log W)^2}  \le C n.
  \]

  By \cite[Theorem 5.14(e)]{LoVe}, $\norm{f}_\infty \le 2^{8n} n^{n/2}$, and so
  \begin{align*}
  \Prob \left[ W \ge R \right] 
  &= \Prob \left[ \frac{f(Y)}{\varphi(Y)} \ge R \right]
  \le \Prob\left[ e^{\norm{Y}^2/2} \ge (2^{17} \pi n)^{-n/2} R \right] \\
  & = \Prob \left[ \norm{Y} \ge \sqrt{2 \log \left( (2^{17} \pi
        n)^{-n/2} R \right)} \right]
  \end{align*}
  for each $R \ge (2^{17} \pi n)^{n/2}$.  Proposition \ref{T:Paouris}
  now implies that
  \[
  \Prob \left[ W \ge R \right] 
  \le e^{-c \sqrt{\log R - \frac{n}{2} \log(2^{17} \pi n)}}
  \le e^{ - c' \sqrt{\log R}}
  \]
  for $\log R \ge C n \log (n+1)$. 

  Substituting $S = c\sqrt{\log R}$, all together this shows  that
  \[
  H(\mu \mid \gamma_n) \le C \left(S^2 d_{TV}(\mu, \gamma_n) + n
    e^{-S}\right)
  \]
  for every $S \ge c \sqrt{n \log (n+1)}$.  The result follows using Lemma
  \ref{T:min}.
\end{proof}

\section*{Acknowledgements}

This research was partially supported by grants from the Simons
Foundation (\#267058 to E.M.\ and \#264103 to M.M.) and the
U.S. National Science Foundation (DMS-1308725 to E.M.).  This work was
carried out while the authors were visiting the Institut de
Math\'ematiques de Toulouse at the Universit\'e Paul Sabatier; the
authors thank them for their generous hospitality.


\bibliographystyle{plain}
\bibliography{probability-metrics}

\end{document}